\newcolumntype{Y}{>{\centering\arraybackslash}X}
\theoremstyle{plain}
\newtheorem{theorem}{Theorem}[section]
\newtheorem{corollary}[theorem]{Corollary}
\newtheorem{definition}[theorem]{Definition}
\newtheorem{lemma}[theorem]{Lemma}
\newtheorem{proposition}[theorem]{Proposition}
\theoremstyle{definition}
\newtheorem*{claim*}{Claim}
\newcommand{\sd}{\mathtt{sd}}
\newcommand{\vsd}{\mathrm{sd}}
\newcommand{\fun}{\mathtt{fun}}
\newcommand{\vfun}{\mathrm{fun}}
\title{Functionality of box intersection graphs}
\author{Clément Dallard\\
\small
Université d'Orléans, INSA Centre Val de Loire, LIFO EA 4022, Orléans, France\\
\small \texttt{clement.dallard@univ-orleans.fr}\\
\and
Vadim Lozin\\
\small
Mathematics Institute, University of Warwick\\
\small \texttt{V.Lozin@warwick.ac.uk}\\
\and
Martin Milani{\v c}\\
\small FAMNIT and IAM, University of Primorska\\
\small \texttt{martin.milanic@upr.si}\\
\and
Kenny \v{S}torgel\\
\small Faculty of Information Studies in Novo mesto\\
\small FAMNIT, University of Primorska\\
\small \texttt{kennystorgel.research@gmail.com}\\
\and
Viktor Zamaraev\\
\small Department of Computer Science, University of Liverpool\\
\small \texttt{Viktor.Zamaraev@liverpool.ac.uk}
}
\begin{document}
\maketitle

\begin{abstract}
Functionality is a graph complexity measure that extends a variety of parameters, such as vertex degree, degeneracy, clique-width, or twin-width.
In the present paper, we show that functionality is bounded for box intersection graphs in $\mathbb{R}^1$, i.e.\ for interval graphs, and unbounded for box intersection graphs in $\mathbb{R}^3$.
We also study a parameter known as symmetric difference, which is intermediate between twin-width and functionality, and show that this parameter is unbounded both for interval graphs and for unit box intersection graphs in $\mathbb{R}^2$.
\end{abstract}

{\it Keywords}: interval graphs; box intersection graphs; graph functionality; symmetric difference; twin-width

\section{Introduction}

The notion of graph functionality provides a common generalisation for various graph parameters, such as maximum vertex degree, degeneracy, clique-width and twin-width, in the sense that boundedness of any of these parameters implies bounded functionality, but not necessarily vice versa. Originally, this notion appeared in an implicit form in~\cite{ACLZ15} in the context of graph representation.
A formal definition of graph functionality was introduced in~\cite{AAL21}, where boundedness or unboundedness of functionality was shown for a variety of graph classes. In particular, it was proved in~\cite{AAL21} that functionality is bounded for permutation graphs, line graphs and unit interval graphs. However, the question of boundedness or unboundedness of functionality in the entire class of interval graphs was left open in~\cite{AAL21}. In the present paper we answer this question by showing that the functionality of any interval graph is at most~$8$.

We observe that interval graphs are precisely box intersection graphs in $\mathbb{R}^1$ (all definitions can be found in \cref{sec:pre}).
This observation, together with our result for interval graphs, naturally leads to the question of boundedness or unboundedness of functionality for box intersection graphs in $\mathbb{R}^d$ for $d>1$. We answer this question for $d\ge 3$ by showing that in this case the functionality is unbounded.

We also observe that the result for unit interval graphs proved in~\cite{AAL21} is, in fact, stronger than showing boundedness of functionality in this class. The result deals with a restricted version of functionality called \emph{symmetric difference}.
This parameter is intermediate between twin-width and functionality in the sense that bounded twin-width implies bounded symmetric difference, which in turn implies bounded functionality, but neither of the reverse implications is valid in general.
The result in~\cite{AAL21} proves bounded symmetric difference for unit interval graphs.
In the present paper, we show that this result cannot be extended to interval graphs by providing an explicit construction of interval graphs of unbounded symmetric difference (and hence of unbounded twin-width). Finally, we show that boundedness of symmetric difference in the class of unit interval graphs, i.e.\ unit box intersection graphs in $\mathbb{R}^1$, cannot be extended to unit box intersection graphs in $\mathbb{R}^2$. However, functionality of unit box intersection graphs in $\mathbb{R}^2$ remains a challenging open question.

The organisation of the paper is as follows. All definitions and notations related to the topic of the paper can be found in \cref{sec:pre}.

In \cref{sec:R1} we show that the functionality of any interval graph is at most~$8$. Then, in \cref{sec:R3} we prove that the functionality of box intersection graphs in $\mathbb{R}^3$ is unbounded. Finally, in \cref{sec:R2} we focus on the symmetric difference and show that this parameter is unbounded both for interval graphs and for unit box intersections graphs in $\mathbb{R}^2$. Our proofs are constructive and describe explicitly families of graphs of unbounded symmetric difference, and hence of unbounded twin-width and clique-width.
\cref{sec:con} concludes the paper with a number of open questions.

\section{Preliminaries}
\label{sec:pre}
For a positive integer $k$, we denote by $[k]$ the set $\{1,\ldots, k\}$.
All graphs in this paper are finite, undirected, without loops and multiple edges.
The vertex set and the edge set of a graph $G$ are denoted $V(G)$ and $E(G)$, respectively.
The {\it neighbourhood} of a vertex $x\in V(G)$, denoted $N(x)$, is the set of vertices of $G$ adjacent to $x$, and the {\it degree} of $x$, denoted  $\deg(x)$, is the size of its neighbourhood.
We denote by $N[x]$ the {\it closed neighbourhood} of $x$, that is, the set $N(x)\cup \{x\}$.
Two vertices $x$ and $y$ are {\it twins} if $N(x)\setminus\{y\}=N(y)\setminus\{x\}$, or, equivalently, if $N[x]\setminus\{x,y\}=N[y]\setminus\{x,y\}$.
The minimum vertex degree and the maximum vertex degree in $G$ are denoted by $\delta(G)$ and $\Delta(G)$, respectively.

Let $G$ be a graph and $A=A_G$ the adjacency matrix of~$G$.
We say that a vertex $y\in V(G)$ is a {\it function of vertices} $x_1,x_2,\ldots,x_k\in V(G) \setminus\{y\}$ if there is a Boolean function $f$ of $k$ variables such that for every vertex $z\in V(G)$ different from $y,x_1,x_2,\ldots,x_k$ we have $A(y,z)=f(A(y,x_1),\ldots,A(y,x_k))$.
We observe that every vertex is a function of some other vertices.
In particular, every vertex is a function of its neighbours, for the function $f\equiv 0$, and a function of its non-neighbours,  for the function $f\equiv 1$.
The minimum $k$ such that $y$ is a function of $k$ other vertices is the {\it functionality of $y$} and is denoted $\vfun_G(y)$, or simply $\vfun(y)$ if the graph is clear from the context. The functionality of $G$ is denoted and defined as follows:
$$
\fun(G)=\max_H \min_{y\in V(H)} \vfun(y),
$$
where the maximum is taken over all induced subgraphs $H$ of~$G$. If in this definition we replace $\vfun(y)$ with $\deg(y)$, then we obtain the definition of degeneracy.
Together with the observation that $\vfun(y)\le \deg(y)$, we conclude that functionality of a graph is always bounded  from above by its degeneracy.
In particular, bounded degeneracy implies bounded functionality.
Similarly, boundedness of some other graph parameters implies bounded functionality. For clique-width, this was proved in~\cite{AAL21} by showing that in any graph of bounded clique-width, there must exist two vertices whose neighbourhoods have small symmetric difference. This motivates the study of one more graph parameter, implicitly introduced in \cite{ACLZ15}, and called the symmetric difference of a graph in~\cite{AAL21}.

Given a graph $G$ and a pair of vertices $x,y$ in $G$, let $\vsd_G(x,y)$ (or simply $\vsd(x,y)$ if the graph is clear from the context) be the number of vertices different from $x$ and $y$ that are adjacent to exactly one of $x$ and~$y$.
In other words, if $x$ and $y$ are non-adjacent, then $\vsd(x,y)$ is the size of the symmetric difference of $N(x)$ and $N(y)$, and if $x$ and $y$ are adjacent, then $\vsd(x,y)$ is the size of the symmetric difference of $N[x]$ and $N[y]$.
Then, the {\it symmetric difference of $G$} is denoted by $\sd(G)$ and defined to be 0 if $G$ is a single-vertex graph, and otherwise
$$
\sd(G) = \max_H \min_{\substack{x,y\in V(H)\\ x\neq y}} \vsd(x,y),
$$
where the maximum is taken over all induced subgraphs $H$ of $G$ with at least two vertices.

If $x$ and $y$ are twins, then $\vsd(x,y)=0$ and the functionality of both vertices is at most~$1$, i.e.\ $x$ and $y$ are functions of each other.
More generally, any two vertices $x$ and $y$ are functions of each other and of the vertices distinguishing them,
i.e.\ $\vfun(x)\le \vsd(x,y)+1$ and $\vfun(y)\le \vsd(x,y)+1$.\footnote{To see that any two vertices $x$ and $y$ are functions of each other and of the vertices distinguishing them, let us denote by $Z$ the set of vertices distinguishing $x$ and $y$, i.e.\ $Z = \{z\in V(G)\setminus\{x,y\}\mid z$ is adjacent to exactly one of $x$ and $y\}$.
Then $x$ is a function of $\{y\}\cup Z$, since
any vertex $v\in V(G)\setminus (\{y\}\cup Z)$ is adjacent to $x$ if and only if $v$ is adjacent to $y$ (in particular, the vertices in $Z$ are inessential in this function).
Similarly, the vertex $y$ is a function of the vertices in $\{x\}\cup Z$.}
The above discussion shows that symmetric difference is a parameter intermediate between clique-width and functionality in the sense that bounded clique-width implies bounded symmetric difference, which in turn implies bounded functionality.

The recently introduced parameter twin-width~\cite{tw1} lies strictly between clique-width and symmetric difference, i.e.\ bounded clique-width implies bounded twin-width, which in turn implies bounded symmetric difference. Therefore, any construction of graphs of unbounded (i.e.\ arbitrarily large) symmetric difference is also of unbounded twin-width and clique-width.

In the present paper, we study functionality and symmetric difference of box intersection graphs.

\begin{definition}
A \emph{box} in $\mathbb{R}^d$ is a solid $d$-dimensional rectangle with
axis-parallel sides.
A~{\it unit box} is a box with all sides of length~$1$.
A graph $G$ is a \emph{box intersection graph in $\mathbb{R}^d$}  (resp., a \emph{unit box intersection graph in $\mathbb{R}^d$}) if each vertex of $G$ can be associated with a box (resp., a unit box) in $\mathbb{R}^d$ so that two vertices of $G$ are adjacent if and only if the corresponding boxes intersect.
\end{definition}

Box intersection graphs in $\mathbb{R}^1$ are known as {\it interval graphs}, i.e.\ intersection graphs of intervals on the real line.

\section{Functionality of interval graphs is bounded}
\label{sec:R1}

Let $G$ be an interval graph with $n$ vertices given together with an interval representation.
Without loss of generality we assume that the endpoints of the intervals are pairwise distinct.
This allows us to label the endpoints of the intervals by numbers from $1$ to $2n$ consecutively
from left to right, and to represent each interval (vertex of $G$) by a pair of numbers $(i,j)$,
where $i$ is the left endpoint and $j$ is the right endpoint of the interval (and thus $i<j$).
We can therefore represent each interval (vertex of $G$) by a point in $\mathbb{N}^2$ above the diagonal $\{(i,i)\mid 1\le i\le n\}$.

We denote the vertex of $G$ corresponding to the point $(i,j)$ by $v_{i,j}$.
The {\it Manhattan distance} between two points $(i,j)$ and $(p,q)$ in $\mathbb{N}^2$ is $|i-p|+|j-q|$. We also define the {\it horizontal distance} between  $(i,j)$ and $(p,q)$ to be $|i-p|$, and the {\it vertical distance}
between  $(i,j)$ and $(p,q)$ to be $|j-q|$.

\begin{lemma}\label{lem:sd}
If the Manhattan distance between two points $(i,j)$ and $(p,q)$ is $k$, then the symmetric difference of $v_{i,j}$ and $v_{p,q}$ is at most $k-2$.
\end{lemma}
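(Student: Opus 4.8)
The plan is to understand precisely how two intervals, represented as points $(i,j)$ and $(p,q)$ in the plane, can differ in their neighbourhoods, and to bound the number of distinguishing vertices in terms of the Manhattan distance. Recall that the interval $v_{i,j}$ has left endpoint $i$ and right endpoint $j$, and two intervals are adjacent precisely when they overlap, i.e.\ when neither lies entirely to the left of the other. I would first write down the adjacency condition explicitly: $v_{i,j}$ and $v_{a,b}$ are adjacent if and only if $i < b$ and $a < j$ (using distinct endpoints, so strict inequalities). A third vertex $v_{a,b}$ distinguishes $(i,j)$ from $(p,q)$ exactly when it is adjacent to one but not the other, so the whole task reduces to counting the intervals whose overlap status flips between the two.

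First I would set up coordinates so that without loss of generality $(i,j)$ and $(p,q)$ are compared componentwise. The key observation is that the intersection status of $v_{a,b}$ with $v_{i,j}$ depends only on where $a$ sits relative to $j$ and where $b$ sits relative to $i$. Concretely, $v_{a,b}$ is \emph{non-adjacent} to $v_{i,j}$ iff $b < i$ (lies to the left) or $a > j$ (lies to the right). So a vertex distinguishes the two intervals only if changing from the parameters $(i,j)$ to $(p,q)$ moves one of its endpoints across one of the relevant thresholds. The thresholds that matter are the two left endpoints $i,p$ (compared against the other interval's right endpoint $b$) and the two right endpoints $j,q$ (compared against $a$). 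I would argue that a distinguishing vertex must have one of its endpoints $a$ or $b$ lying in one of the ``gap'' ranges determined by these thresholds, and that the total length of these ranges is controlled by the horizontal distance $|i-p|$ and the vertical distance $|j-q|$.

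The cleanest route, I expect, is to exploit that the $2n$ endpoints are all distinct integers from $1$ to $2n$, so that the number of endpoints strictly between two threshold values is at most the gap between them, and this gap is exactly a horizontal or vertical distance. I would decompose the set of distinguishing vertices according to \emph{which} threshold their endpoint must cross: a vertex distinguished because its right endpoint $b$ falls between $\min(i,p)$ and $\max(i,p)$ contributes at most $|i-p|-1$ such endpoints (since the endpoints are distinct integers and one slot is occupied by an interval endpoint itself), and similarly a vertex distinguished via its left endpoint $a$ falling between $\min(j,q)$ and $\max(j,q)$ contributes at most $|j-q|-1$. Summing the horizontal and vertical contributions, the total is at most $(|i-p|-1)+(|j-q|-1) = k-2$, which is exactly the claimed bound.

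The main obstacle I anticipate is the bookkeeping needed to ensure no distinguishing vertex is double-counted and that the ``$-1$'' savings in each coordinate are genuinely available. The subtle point is that the same gap interval on the integer line contains at most one fewer endpoint than its length because the endpoints are a permutation of $\{1,\dots,2n\}$ with no repeats, and crucially the endpoint of $v_{i,j}$ or $v_{p,q}$ itself sits at the boundary of each gap and thus cannot be contributed by an external distinguishing vertex; this is what produces the two separate ``$-1$'' terms rather than a naive bound of $k$. I would therefore carefully verify, for each of the four threshold comparisons, exactly which endpoint occupies the boundary slot, and confirm that every distinguishing vertex is captured by precisely one of the horizontal or vertical gap counts. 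Handling the boundary cases where $i=p$ or $j=q$ (so one coordinate contributes nothing and the bound must still hold) will require a short separate check, but I expect it to follow the same counting principle.
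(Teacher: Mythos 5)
Your proof is correct and takes essentially the same route as the paper's: a distinguishing vertex must have an endpoint lying strictly between the two left endpoints $i,p$ or strictly between the two right endpoints $j,q$, and since all $2n$ endpoints are pairwise distinct integers these two open gaps contain at most $(|i-p|-1)+(|j-q|-1)=k-2$ endpoints in total. One minor simplification: because the endpoints are pairwise distinct, the degenerate cases $i=p$ or $j=q$ cannot occur for distinct vertices, so the separate boundary check you anticipate is unnecessary.
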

\begin{proof}
    It is easy to check that if a vertex $v_{s,t}$ is in the symmetric difference of the neighbourhoods of $v_{i,j}$ and $v_{p,q}$ and different from the two vertices, then one of the endpoints of $(s,t)$ belongs to the interval $(\min\{i,p\}, \max\{i,p\})$ or to the interval $(\min\{j,q\}, \max\{j,q\})$. Since the endpoints of all intervals are pairwise distinct and the Manhattan distance between $(i,j)$ and $(p,q)$ is $k$, there could be at most $k-2$ possible vertices $v_{s,t}$ satisfying this property.
\end{proof}

\begin{theorem}\label{thm:fun-R1}
The functionality of any interval graph is at most~$8$.
\end{theorem}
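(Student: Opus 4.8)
The plan is to exploit that $\fun(G)=\max_H\min_{y}\vfun(y)$, so it suffices to produce, in every induced subgraph $H$ of $G$, a single vertex of functionality at most $8$. Since $H$ is again an interval graph and interval graphs are chordal, $H$ has a \emph{simplicial} vertex $y$, and I would take $y$ to be such a vertex. Then $N[y]$ consists of pairwise intersecting intervals, so by the Helly property on the line they share a common point; choosing it off the (distinct, integer) endpoints yields a point $\pi$ with $N[y]=\{v\in V(H):\pi\in v\}$. Consequently the neighbours of $y$ are exactly the intervals that \emph{straddle} $\pi$, while every non-neighbour lies entirely left of $\pi$ (its right endpoint is ${<}\,\pi$) or entirely right of $\pi$ (its left endpoint is ${>}\,\pi$). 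Throughout I write $s_z<t_z$ for the two endpoints of an interval $z$.

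Next I would pin down the local picture. Let $\alpha=\max_{v\in N[y]}s_v$ and $\beta=\min_{v\in N[y]}t_v$, so the common intersection of $N[y]$ is $[\alpha,\beta]$ with $\alpha$ a left endpoint and $\beta$ a right endpoint. A short argument shows no endpoint lies strictly between $\alpha$ and $\beta$: a left endpoint there would violate maximality of $\alpha$ or sit on the wrong side of $\pi$, and symmetrically for right endpoints; since every integer of $[\alpha,\beta]$ is an endpoint, this forces $\beta=\alpha+1$ to bracket $\pi$. Let $U$ and $W$ be the intervals owning the values $\alpha$ and $\alpha+1$; both straddle $\pi$, hence are neighbours of $y$. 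Because $\alpha$ is a left endpoint, $A(U,z)=[\,t_z>\alpha\,]\wedge[\,s_z<t_U\,]=[\,t_z>\pi\,]\wedge[\,s_z<t_U\,]$, which is $1$ on every straddler of $\pi$ and $0$ on every left non-neighbour; dually, since $\alpha+1$ is a right endpoint, $A(W,z)=[\,t_z>s_W\,]\wedge[\,s_z<\pi\,]$ is $1$ on every straddler and $0$ on every right non-neighbour. Therefore $A(U,z)\wedge A(W,z)=1$ precisely when $z$ straddles $\pi$, i.e.\ exactly when $z\sim y$; taking $f=\wedge$ exhibits $y$ as a function of $\{U,W\}$, so $\vfun(y)\le 2$.

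The clean step requires $U\ne y$ and $W\ne y$, which fails exactly when $\alpha=s_y$ or $\beta=t_y$ — that is, when $y$ starts latest or ends earliest in $N[y]$, the degenerate case being $y$ contained in all of its neighbours. I expect this to be the main obstacle, and it is what stops the argument from yielding the constant $2$ outright, since the only interval whose relevant endpoint is \emph{tight} at $\pi$ is then $y$ itself. I would resolve it in two ways: (i) choose the simplicial vertex more carefully, avoiding ones that are inclusion-minimal within their neighbourhood, so that both boundary probes are genuine neighbours distinct from $y$; and (ii) when a probe is still forced to be $y$, replace it by the owner of the next endpoint outward from $\pi$ (which, whenever it is of the correct type, is again a straddler giving a clean one-sided cut), or observe that the offending non-neighbours then share almost all of $N(y)$ with $y$, so that the inequality $\vfun(y)\le \vsd(x,y)+1$ from the preliminaries applies. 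A careful case analysis of these degenerate configurations — checking in each that a \emph{constant} number of probes separates the neighbours of $y$ from its non-neighbours — is the technical heart of the argument and is what produces the stated bound $8$.
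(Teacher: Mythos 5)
Your non-degenerate case is correct, and it is a genuinely different route from the paper: the paper never uses simpliciality, working instead with the representation of intervals as points of $\mathbb{N}^2$, a partition of the grid into stripes of width $5$, and a counting argument showing some non-empty, non-marginal block exists; your Helly-type argument would even give functionality $2$ for a simplicial vertex $y$ whose interval is not extremal in $N[y]$. The problem is that the degenerate case is not, as you present it, a technicality to be finished by ``a careful case analysis'': it is exactly the point where the strategy of committing to a simplicial vertex breaks down, and neither of your two proposed repairs works.

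Concretely, take $\epsilon<1/(2m)$ and consider the intervals $y=[0,1]$, $v_i=[-1-i\epsilon,\,2+i\epsilon]$ for $i\in[m]$, and $u_j=[1+j\epsilon,\,3+j\epsilon]$ for $j\in[m]$. Then $V\cup U$ (with $V=\{v_i\}$, $U=\{u_j\}$) is a clique, and $y$ is adjacent exactly to $V$; so $y$ is simplicial, and it is precisely your doubly degenerate case ($\alpha=s_y$, $\beta=t_y$, i.e.\ $U=W=y$). Every vertex other than $y$ is adjacent to \emph{all} other vertices of $V\cup U$, so any probe set $P$ with $|P|<m$ misses some $v\in V$ and some $u\in U$, and these two have identical adjacency to every probe while $A(y,v)\neq A(y,u)$. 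Hence $\vfun(y)\ge m$: an (arbitrarily chosen) simplicial vertex can have arbitrarily large functionality, so no local case analysis around $y$ can produce the constant $8$; a correct proof must be able to abandon $y$ and certify a \emph{different} vertex (here the vertices of $V\cup U$, which are twins or near-twins). Your fix (ii) fails on this example in both of its forms: the ``next endpoint outward'' to the right of $t_y$ is the left endpoint of $u_1$, whose owner is not a straddler, and the offending non-neighbours do \emph{not} nearly share $N(y)$ --- indeed $\vsd(y,u_j)=m-1$, so the bound $\vfun(y)\le\vsd(\cdot,\cdot)+1$ gives nothing. Your fix (i) is also not available in general: in graphs whose simplicial vertices are all inclusion-minimal in their neighbourhoods (e.g.\ pairwise disjoint short intervals, each properly contained in all the long intervals meeting it), there is no ``better'' simplicial vertex to switch to. So the missing part of your argument --- which vertex to pick when the simplicial one is degenerate, and how to certify it --- is the actual content of the theorem; it is what the paper's global stripe/block counting argument supplies, and its low-functionality vertex is in general not simplicial.
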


\begin{proof}
Since the class of interval graphs is hereditary, to prove the theorem it suffices to show that each interval graph contains a vertex of  functionality at most~$8$.

Let $G$ be an interval graph.
We use a representation of $G$ by points in $\mathbb{N}^2$ that has been described in the beginning of the section.
All points representing the vertices of $G$ are located above the diagonal in the $[2n]\times [2n]$ area of the integer grid.
Each vertical line and each horizontal line in this area contains at most one vertex of~$G$.
We split the $2n$ horizontal lines in this area into $\lceil 2n/5 \rceil$ {\it stripes}, each containing $5$
consecutive lines, except possibly one stripe containing at most $4$ consecutive lines if $2n$ is not a multiple of~$5$.
Similarly, we split vertical lines into $\lceil 2n/5 \rceil$ stripes. The intersection of a horizontal stripe and a vertical stripe will be called a {\it block}.

Assume first that there is a block containing two vertices of~$G$. Then the Manhattan distance between these two vertices is at most $8$ and hence,
according to \Cref{lem:sd}, the symmetric difference of these two vertices is at most $6$, implying that the functionality of each of them is at most~$7$.

From now on, we assume that each block contains at most one vertex of~$G$. We call blocks containing no vertex of $G$ {\it empty}.
To find a vertex of low functionality,
let us start by considering the case when
there exist two {\it consecutive} horizontal lines,
say $L_{\sf h}$ and $L'_{\sf h}$,
and two {\it consecutive} vertical lines,
say $L_{\sf v}$ and $L'_{\sf v}$,
such that
\begin{itemize}
\item one of the four points in the intersections of these lines (that is, $L_{\sf h}\cap L_{\sf v}$,
$L_{\sf h}\cap L'_{\sf v}$,
$L'_{\sf h}\cap L_{\sf v}$, and
$L'_{\sf h}\cap L'_{\sf v}$)
is a vertex of $G$, say vertex $x\in L_{\sf h}\cap L_{\sf v}$,

\item there is a vertex of $G$ above $x$ in the other vertical line $L'_{\sf v}$, say vertex $y$,

\item there is a vertex of $G$ to the left of $x$ in the other horizontal line $L'_{\sf h}$, say vertex~$z$.
\end{itemize}
In this case, we claim that $x$ is a function of $y$ and~$z$. More precisely, a vertex $v\in V(G)\setminus \{x,y,z\}$ is adjacent to $x$ if and only if $A(v,y)= A(v,z)=1$,
where $A$ is the adjacency matrix of~$G$.
Indeed, by construction the four lines contain vertices $x,y,z$ only, and any other vertex $v$  is either adjacent to $x$, in which case it is also adjacent to both $y$ and $z$,
or non-adjacent to $x$, in which case it is non-adjacent to at least one of $y$ or~$z$.

The requirement that the two vertical lines and the two horizontal lines are consecutive can be relaxed by asking that  $x$ and $y$ are of bounded horizontal distance from each other,
while $x$ and $z$ are of bounded vertical distance from each other, say both distances are at most $k$, i.e.\ there are at most $k-1$ vertical lines between $x$ and $y$ and
at most $k-1$ horizontal lines between $x$ and~$z$.
Then the vertical lines between $x$ and $y$ contain at most $k-1$ vertices $y_1,\ldots, y_{k-1}$ and the horizontal lines between $x$ and $z$ contain at most $k-1$ vertices $z_1,\ldots, z_{k-1}$.
In this case,  $x$ is a function of $y, y_1,\ldots,y_{k-1},z,z_1,\ldots,z_{k-1}$ for the same reason as above.
The only difference is that $y_1,\ldots,y_{k-1},z_1,\ldots,z_{k-1}$ are inessential variables of the function, i.e.\ the function does not depend on them.

To find a vertex $x$ satisfying the above conditions, we introduce the following terminology.
The leftmost non-empty block in any horizontal stripe and the topmost non-empty block in any vertical stripe will be called {\it marginal blocks}.

Let $B$ be a non-empty block and let $x$ be the only vertex of $G$ that belongs to~$B$. If $B$ is not marginal,
then there is a vertex $y$ above $x$ in the same vertical stripe and a vertex $z$ to the left of $x$ in the same horizontal stripe.
In this case, $x$ is a function of $y$ and $z$ and  at most $3$ vertices in the vertical lines between $x$ and $y$ and
at most $3$ vertices in the horizontal lines between $x$ and $z$, i.e.\ the functionality of $x$ is at most~$8$.

It remains to show that a non-empty non-marginal block does exist. Each horizontal stripe contains at most one marginal block and each vertical stripe
contains at most one marginal block. Therefore, there are at most $2 \lceil 2n/5 \rceil$ marginal blocks. Since the total number of non-empty blocks is
exactly $n$, and $n> 2 \lceil 2n/5 \rceil$ holds for any $n\ge 9$,
we conclude that there is at least one non-empty non-marginal block and hence a vertex of functionality at most $8$ for all $n\ge 9$.
In graphs with at most $8$ vertices, the functionality of each vertex is at most~$7$.
\end{proof}

\section{Functionality of box intersection graphs in $\mathbb{R}^3$ is unbounded}
\label{sec:R3}

Having proved boundedness of functionality for interval graphs, i.e.\ for box intersection graphs in $\mathbb{R}^1$, it is natural to ask whether this parameter is bounded for box intersection graphs in $\mathbb{R}^d$ for larger values of~$d$. In the present section, we answer this question in the negative for $d\ge 3$. We start with a helpful lemma, where $K_{n,m}$ stands for the complete bipartite graph with parts of size $n$ and~$m$.

\begin{lemma}\label{lem:unbounded-fun}
	Let $p \geq 2$ be an integer and let $X$ be a class of $K_{2,p}$-free triangle-free graphs such that there exists an increasing function $g : \mathbb{N} \rightarrow \mathbb{N}$, and an infinite sequence of graphs
	$G_1, G_2, \ldots $ in $X$ with $|V(G_n)| = g(n)$ such that $a(n) := \delta(G_n)$ is in $\omega(1)$ and
	$b(n) := \Delta(G_n)$ is in $o(g(n))$.
	Then $X$ has unbounded functionality.
\end{lemma}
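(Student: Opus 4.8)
The plan is to show that for any fixed bound $b$, some graph $G_n$ in the sequence contains an induced subgraph in which \emph{every} vertex has functionality exceeding $b$. Since functionality is monotone under taking induced subgraphs (it is defined as a maximum over induced subgraphs), it suffices to exhibit, for each target value, a single graph $H$ in the class all of whose vertices have large functionality. A natural candidate is to take $H = G_n$ itself for suitably large $n$, or a carefully chosen induced subgraph of it, and argue that low functionality of a vertex $y$ forces strong structural constraints that the $K_{2,p}$-free and triangle-free hypotheses rule out.

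The key idea I would pursue is a counting/pigeonhole argument exploiting the gap between $a(n)=\delta(G_n)$ growing without bound and $b(n)=\Delta(G_n)$ being $o(g(n))$. Suppose a vertex $y$ has functionality $k$, witnessed by vertices $x_1,\dots,x_k$ and a Boolean function $f$, so that $N(y)$ is determined on $V(G_n)\setminus\{y,x_1,\dots,x_k\}$ by the adjacency pattern of each $z$ to $x_1,\dots,x_k$. The point is that all vertices $z$ outside $\{y,x_1,\dots,x_k\}$ sharing the same adjacency vector to $(x_1,\dots,x_k)$ are assigned the same adjacency to $y$ by $f$. Grouping vertices by their $k$-bit adjacency vector partitions $V(G_n)$ into at most $2^k$ classes (plus the $k+1$ exceptional vertices); within each class, $z$ is adjacent to $y$ either for all or for none of them. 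Now I would use triangle-freeness and $K_{2,p}$-freeness to bound how large the neighbourhood $N(y)$ can be that is built from such "monochromatic" classes: triangle-freeness says $N(y)$ is an independent set and each $x_i$ has few neighbours in $N(y)$, while $K_{2,p}$-freeness limits how many common neighbours any two vertices can share. The goal is to derive that $\deg(y)$ must then be either very small (contradicting $\delta(G_n)=a(n)\to\infty$) or comparable to $g(n)$ (contradicting $\Delta(G_n)=o(g(n))$), for $k$ bounded and $n$ large.

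More concretely, I would argue that a class of vertices all adjacent to $y$ (a monochromatic class inside $N(y)$) consists of vertices with a common prescribed adjacency pattern to $x_1,\dots,x_k$; such vertices are either pairwise forced into being common neighbours of some $x_i$, where $K_{2,p}$-freeness caps their number, or they avoid all $x_i$, in which case their adjacency to $y$ is governed by $f$ evaluated at the all-zero vector, and triangle-freeness plus the structure of the class should limit the interaction. The crux is to show each of the $2^k$ classes contributing to $N(y)$ has size bounded in terms of $k$ and $p$ (independent of $n$), giving $\deg(y) = |N(y)| \le 2^k \cdot c(k,p)$, a bound independent of $n$. Combined with $\delta(G_n)=a(n)\to\infty$, this already fails once $n$ is large unless $k$ is large. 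Symmetrically, if $f$ sends some all-present class into $N(y)$, one leverages the low maximum degree $b(n)=o(g(n))$ to conclude that most vertices fall into classes mapped to \emph{non}-adjacency, so that almost all of $V(G_n)$ is non-adjacent to $y$, forcing a large independent-like structure that, via $K_{2,p}$-freeness, bounds neighbourhood overlaps among the $x_i$ and again pins $k$ from below.

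The main obstacle I anticipate is controlling the vertices $z$ that are non-adjacent to \emph{all} of $x_1,\dots,x_k$: these form a potentially huge class (since $\Delta=o(g(n))$ means each $x_i$ misses almost everything), and $f$ assigns them a single value of adjacency to $y$. If that value is $1$, then $y$ is adjacent to almost all of $V(G_n)$, contradicting $\Delta(G_n)=o(g(n))$; if it is $0$, then essentially all neighbours of $y$ must be among the vertices hitting at least one $x_i$, of which there are at most $\sum_i \deg(x_i) \le k\cdot b(n)$. Here the delicate balance is between $a(n)\le \deg(y)\le k\,b(n)$ — which is consistent unless one extracts a finer contradiction — so the real work is to combine this degree bound with the $K_{2,p}$-free and triangle-free conditions to force $k$ to grow with $n$. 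I expect the cleanest route is to fix $k$, assume every vertex has functionality at most $k$ in some large induced subgraph, and run a global double-counting over all vertices simultaneously, deriving an inequality that violates $a(n)\in\omega(1)$ and $b(n)\in o(g(n))$ for $n$ large; this global version sidesteps the awkward per-vertex case analysis of the all-zero class.
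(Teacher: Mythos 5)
Your proposal has the same skeleton as the paper's proof: assume functionality is bounded by $k$, take a vertex $y$ of functionality at most $k$ in $G_n$ with witnesses $x_1,\dots,x_k$, classify all other vertices by their adjacency vector to $(x_1,\dots,x_k)$, and exploit the all-zero class, which is huge because $\Delta(G_n)=o(g(n))$. Your handling of the subcase $f(0,\dots,0)=1$ is correct and matches half of the paper's argument. But the subcase $f(0,\dots,0)=0$ is a genuine gap, and you say so yourself: you bound the neighbours of $y$ hitting some $x_i$ by $\sum_i\deg(x_i)\le k\,b(n)$, observe correctly that $a(n)\le\deg(y)\le k\,b(n)$ is not a contradiction, and then defer to an unspecified ``global double-counting'' that is never carried out. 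The bound $k\,b(n)$ is simply the wrong bound. A neighbour of $y$ that is adjacent to $x_i$ is a \emph{common} neighbour of $y$ and $x_i$, and common neighbourhoods are exactly what the hypotheses control: if $y$ and $x_i$ are adjacent, triangle-freeness gives them no common neighbour; if they are non-adjacent, $K_{2,p}$-freeness gives at most $p-1$. So at most $k(p-1)$ neighbours of $y$ are adjacent to some $x_i$, and at most $k$ more can be the $x_i$ themselves; if $f(0,\dots,0)=0$ forced every remaining neighbour of $y$ to hit some $x_i$, then $\deg(y)\le kp$, contradicting $\delta(G_n)\ge kp+1$ for large $n$. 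This is precisely how the paper proceeds, phrased without any case analysis on $f$: the minimum-degree threshold $\delta(G_n)\ge kp+1$ produces a neighbour $u$ of $y$ with all-zero pattern, the maximum-degree threshold $(k+1)\Delta(G_n)\le g(n)-k-2$ produces a non-neighbour $w$ with all-zero pattern, and the pair $(u,w)$ contradicts functionality whatever $f$ does on the zero vector.

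The frustrating part is that you state the missing ingredient yourself in your second and third paragraphs (``each $x_i$ has few neighbours in $N(y)$,'' ``$K_{2,p}$-freeness caps their number'') but never deploy it in the final accounting, where you silently revert to counting \emph{all} vertices adjacent to some $x_i$ rather than only those that are also neighbours of $y$. Once the common-neighbourhood bound is in place, everything else in your proposal becomes unnecessary: there is no need for per-class size bounds over all $2^k$ classes, no need for a carefully chosen induced subgraph (taking $G_n$ itself suffices), and no need for any global double-counting over all vertices. The entire proof needs only the all-zero class and the two explicit degree thresholds above.
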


\begin{proof}
	Suppose towards a contradiction that $X$ and $(G_n)_{n \in \mathbb{N}}$ is a class and a sequence, respectively, as in the statement of the lemma, but $X$ has functionality bounded by $k$ for some constant~$k$.
	
	Let $n$ be a large enough integer such that
	\[\delta(G_n) \geq kp+1\quad\quad\quad\textrm{ and }\quad\quad\quad \Delta(G_n) \leq \frac{g(n)-k-2}{k+1}\,.\]
	Such an $n$ exists by the assumption on the functions $a$ and~$b$.
	By assumption, there exist vertices $x, y_1, y_2, \ldots, y_k  \in V(G_n)$ such that $x$ is a function of $y_1, y_2, \ldots, y_k$.
	To prove the lemma it is enough to find a pair of vertices $u,w$ distinct from $x,y_1,y_2, \ldots, y_k$ such that $u$ is adjacent to $x$,
	$w$ is \emph{not} adjacent to $x$, but $u,w$ are not distinguished by $y_1, y_2, \ldots, y_k$
	(i.e.\ $u$ and $w$ have the same neighbourhood in $\{ y_1, y_2, \ldots, y_k \}$). Indeed, this will contradict
	the assumption that $x$ is a function of $y_1, y_2, \ldots, y_k$.
	
	First, note that since $G_n$ is $K_{2,p}$-free and triangle-free, $x$ has at most $p-1$ neighbours in common with $y_i$ for every $i \in [k]$.
	Thus, from $\deg(x) \geq kp+1$ we conclude that there exists a neighbour $u$ of $x$ that is distinct from $y_1, \ldots, y_k$ and is not adjacent to any of these vertices. On the other hand, from $\Delta(G_n)\leq \frac{g(n)-k-2}{k+1}$,
	we conclude that there exists a vertex $w$ that is distinct from $x, y_1, \ldots, y_k$ and adjacent to none of them.
\end{proof}

We denote by $Q_n$ the $n$-dimensional hypercube, and by $\mathcal{Q}$ the hereditary closure of hypercubes, that is, the class of all induced subgraphs of hypercubes.
As a corollary of \Cref{lem:unbounded-fun} we recover a result from~\cite{AAL21}.

\begin{corollary}
The class $\mathcal{Q}$ has unbounded functionality.
\end{corollary}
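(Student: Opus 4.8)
The corollary follows from $\Cref{lem:unbounded-fun}$ by verifying that the class $\mathcal{Q}$ of induced subgraphs of hypercubes satisfies the hypotheses of the lemma, with the sequence of graphs being the hypercubes themselves. The plan is to take $G_n = Q_n$, so that $g(n) = |V(Q_n)| = 2^n$, and to check the three structural conditions: that there is a fixed $p$ for which the graphs are $K_{2,p}$-free, that they are triangle-free, and that the minimum and maximum degrees grow at the required rates.

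\begin{proof}
We apply \Cref{lem:unbounded-fun} to the sequence $G_n = Q_n$ of hypercubes, which all belong to~$\mathcal{Q}$. Here $g(n) = |V(Q_n)| = 2^n$, and since $Q_n$ is $n$-regular we have $a(n) = \delta(Q_n) = n$ and $b(n) = \Delta(Q_n) = n$. Clearly $a(n) = n$ is in $\omega(1)$ and $b(n) = n$ is in $o(2^n) = o(g(n))$. It remains to exhibit a constant $p \geq 2$ for which every $Q_n$ is $K_{2,p}$-free, and to verify that each $Q_n$ is triangle-free.

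The hypercube $Q_n$ is bipartite, with the two parts given by the vertices (binary strings of length $n$) of even and odd weight, respectively; in particular it contains no triangle. For the $K_{2,p}$-freeness, observe that any two distinct vertices $u, w$ of $Q_n$ have at most two common neighbours: a common neighbour of $u$ and $w$ is a string differing from each of them in exactly one coordinate, which forces $u$ and $w$ to differ in exactly two coordinates, and then there are precisely two such strings (obtained by flipping one or the other of these two coordinates in $u$). Hence $Q_n$, and therefore every induced subgraph of $Q_n$, is $K_{2,3}$-free. We may thus take $p = 3$. All hypotheses of \Cref{lem:unbounded-fun} are satisfied, so $\mathcal{Q}$ has unbounded functionality.
\end{proof}
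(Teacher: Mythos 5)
Your proof is correct and follows essentially the same route as the paper: apply \Cref{lem:unbounded-fun} to the sequence of hypercubes $Q_n$ with $g(n)=2^n$ and $a(n)=b(n)=n$. The only difference is that you verify $K_{2,3}$-freeness directly via the two-common-neighbours argument, whereas the paper cites~\cite{GG75} for this fact; your version is self-contained but otherwise identical in substance.
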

\begin{proof}
	The corollary follows by applying \Cref{lem:unbounded-fun} to the sequence
	$Q_1, Q_2, Q_3, \ldots$. Indeed, every graph in $\mathcal{Q}$ is $K_{2,3}$-free~\cite{GG75}, and the
	functions $a(n) = b(n) = \log g(n)$, where $g(n) = 2^n$, satisfy the conditions of \Cref{lem:unbounded-fun}.
\end{proof}

A bipartite graph $G=(P,B,E)$ is a \emph{point-box incidence} graph if the vertices in $P$ (which we call \emph{point-vertices}) can be associated with points in the Euclidean plane ($\mathbb{R}^2$) and the vertices in $B$ (which we call \emph{box-vertices}) can be associated with boxes in the plane so that $p \in P$ and $b \in B$ are adjacent if and only if the box associated with $b$ contains the point associated with~$p$.

\begin{lemma}\label{lem:unbounded-fun-2}
	The class of point-box incidence graphs has unbounded functionality.
\end{lemma}
\begin{proof}
	To prove the lemma we will show that there is a sequence of point-box incidence graphs $(G_n)_{n \in \mathbb{N}}$ that satisfies the conditions of \Cref{lem:unbounded-fun}.
	
	We will use the construction presented in~\cite{BCSTT21} (see Proposition 3.5 and Lemma 3.3) that was used to show the existence of $K_{2,2}$-free point-box incidence graphs with superlinear number of edges.
	The minimum and the maximum degrees of the vertices, which are important for applications of \Cref{lem:unbounded-fun}, were not analysed explicitly in~\cite{BCSTT21}.
	In order to do this, below we describe the construction in graph theoretic terms, from which we can easily infer the minimum and the maximum degrees of the graphs.
	
	Let $n$ be a positive integer.
	We define $H^{n}_1 := (P_1, B_1, E_1)$ to be the star $K_{1,n}$ with the central vertex in $B_1$ and $n$ leaves in $P_1$.
    For every $i = 2,\ldots,n$, we define $H^{n}_i$ inductively as follows.
    The graph $H^{n}_i=(P_i,B_i, E_i)$ is obtained by taking $n$ vertex-disjoint copies of $H^{n}_{i-1}$, adding $|P_{i-1}|$ box-vertices to $B_i$, and adding a perfect matching between these vertices and the point-vertices of each of the copies of $H^{n}_{i-1}$.\footnote{To see that each graph $H_i^n$ is a point-box incidence graph, we provide an informal description of the corresponding geometric construction.
    If $i\ge 2$ and, by induction hypothesis, $\mathcal{R}_{i-1}$ is a geometric realisation of $H^{n}_{i-1}$ in which all points representing point-vertices have different $y$-coordinates, then a geometric realisation $\mathcal{R}_{i}$ of $H^n_{i}$
    is defined as follows. Take $n$ disjoint copies of $\mathcal{R}_{i-1}$ that are translations of $\mathcal{R}_{i-1}$ along $x$-axis. In these $n$ copies, every point-vertex of $\mathcal{R}_{i-1}$ has $n$ copies and, by assumption, all of them have the same $y$-coordinate, which is different from the $y$-coordinates of the remaining point-vertices of  $\mathcal{R}_{i-1}$. Thus, we can add $|P_{i-1}|$ new disjoint boxes such that each of them contains the $n$ copies of a point-vertex in $\mathcal{R}_{i-1}$ and no other point-vertices. To obtain $\mathcal{R}_{i}$, in each of the newly added boxes we shift point-vertices contained in it vertically so that they all have pairwise distinct $y$-coordinates and each point stays in the boxes it belongs to.}
    Notice that, if $p_i$ and $b_i$ denote the number of point- and box-vertices in $H^{n}_i$, respectively, then $p_1 = n$, $b_1 = 1$, and $p_i = n \cdot p_{i-1}$, $b_i = n \cdot b_{i-1} + p_{i-1}$.
    From these recurrence relations, it is easy to deduce that $p_i = n^i$ and $b_i = i\cdot n^{i-1}$ for all $i \in \{1,\ldots,n\}$.
    In particular, $p_{n} = b_{n} = n^{n}$.
	
	Note that each of the graphs $H^{n}_i$ is $K_{2,2}$-free.
We further observe that every time we add a new box-vertex its degree is $n$ and it does not change in the subsequent graphs.
The degree of every copy of a point-vertex from $H^{n}_{i-1}$ increases by one in $H^{n}_i$, i.e.\ the degree of every point-vertex in $H^{n}_i$ is~$i$.
Hence, $G_{n} := H^{n}_{n}$ is a $K_{2,2}$-free point-box incidence graph with $g(n):=2n^n$ vertices in which every vertex has degree $n \in \Theta\left(\frac{\log g(n)}{\log\log g(n)}\right)$.
	Applying \Cref{lem:unbounded-fun} to $(G_n)_{n \in \mathbb{N}}$ implies the lemma.
\end{proof}

It is known that point-box incidence graphs are box intersection graphs in $\mathbb{R}^3$ (see, e.g.,~\cite{TZ21}). Summarising, we derive the main result of this section.

\begin{theorem}\label{thm:fun-R3}
	The class of box intersection graphs in $\mathbb{R}^3$ has unbounded functionality.
\end{theorem}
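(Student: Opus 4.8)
The plan is to reduce \Cref{thm:fun-R3} directly to \Cref{lem:unbounded-fun-2} together with the geometric fact recalled immediately above. By \Cref{lem:unbounded-fun-2} the class of point-box incidence graphs already has unbounded functionality, so it is enough to observe that every point-box incidence graph is (isomorphic to) a box intersection graph in $\mathbb{R}^3$, and then to invoke the monotonicity of functionality under class inclusion: if $X\subseteq Y$ are graph classes then $\sup_{G\in X}\fun(G)\le\sup_{G\in Y}\fun(G)$, so unboundedness on the smaller class forces unboundedness on the larger one. The embedding of point-box incidence graphs into box intersection graphs in $\mathbb{R}^3$ is standard, and I would simply cite \cite{TZ21}; for orientation, however, here is the realisation I have in mind.

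Given a point-box incidence graph $G=(P,B,E)$, I would enumerate the box-vertices as $b_1,\dots,b_N$, where $b_i$ is associated with the axis-parallel rectangle $R_i\subseteq\mathbb{R}^2$. Each point-vertex $p\in P$, located at $(a_p,c_p)\in\mathbb{R}^2$, is represented by the box $\{a_p\}\times\{c_p\}\times[0,N+1]$, and each box-vertex $b_i$ by the slab $R_i\times[i,i+\tfrac12]$. Then two point-boxes are disjoint because their underlying planar points differ; two box-boxes are disjoint because the intervals $[i,i+\tfrac12]$ are pairwise separated in the third coordinate; and the box of $p$ meets the box of $b_i$ if and only if $(a_p,c_p)\in R_i$, since the third-coordinate ranges always overlap (each slab $[i,i+\tfrac12]$ lies inside $[0,N+1]$). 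This is precisely the adjacency relation of $G$, so $G$ is realised as a box intersection graph in $\mathbb{R}^3$.

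I expect the only point requiring any genuine care to be this geometric embedding, and even there the work is light: one must separate the box-vertices along the third axis (which is exactly what the disjoint slabs $R_i\times[i,i+\tfrac12]$ achieve) while keeping each point-vertex tall enough to reach every box, and one must promote the degenerate point-boxes to full-dimensional boxes by a harmless thickening $\{a_p\}\times\{c_p\}\mapsto[a_p,a_p+\varepsilon]\times[c_p,c_p+\varepsilon]$ with $\varepsilon$ smaller than all coordinate gaps, assuming as we may that no planar point lies on the boundary of any $R_i$. Since box intersection graphs in $\mathbb{R}^3$ form a hereditary class containing all point-box incidence graphs, the functionality bound transfers and \Cref{thm:fun-R3} follows at once.
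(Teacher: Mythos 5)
Your proposal is correct and is essentially identical to the paper's own proof: the paper likewise derives \Cref{thm:fun-R3} by combining \Cref{lem:unbounded-fun-2} with the known fact (cited from \cite{TZ21}) that point-box incidence graphs are box intersection graphs in $\mathbb{R}^3$. The only difference is that you additionally sketch the slab-and-segment embedding explicitly, which the paper leaves to the citation; your sketch (including the $\varepsilon$-thickening under a general-position assumption) is sound.
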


Clearly, \Cref{thm:fun-R3} implies that the class of box intersection graphs in $\mathbb{R}^d$ has unbounded functionality for all $d\ge 3$.

\section{Symmetric difference is unbounded for the classes of interval graphs and unit box intersection graphs in $\mathbb{R}^2$}
\label{sec:R2}

Bonnet et al.~showed in~\cite{MR4449818} that the class of interval graphs has unbounded twin-width based on a construction of a class of interval graphs that can represent an arbitrary permutation. The construction relies on the notion of a \emph{half graph}, that is, a bipartite graph with $2n$ vertices, $n\ge 1$, that admits a partition of its vertex set into two equally sized independent sets $X =\{x_1,\ldots, x_n\}$ and $Y= \{y_1,\ldots, y_n\}$ such that for all $i,j\in [n]$, vertex $x_i$ is adjacent to vertex $y_j$ if and only if $i < j$.

\begin{definition}\label{ABC-graph}
An \emph{ABC graph} is any graph $G$ with $3n$ vertices, $n\ge 1$, such that  the vertex set of $G$ can be partitioned into three cliques $A$, $B$ and $C$, each of size $n$, such that there are no edges between $A$ and $C$, the edges between $A$ and $B$ form a half graph, and the edges between $B$ and $C$ form a half graph.
More precisely, there exists an order $a_1,\ldots, a_n$ for the vertices in $A$ and an order $b_1,\ldots, b_n$ for the vertices in $B$ such that $a_i$ is adjacent to $b_j$ if and only if $i<j$.
Similarly, there exists another order $b_1',\ldots, b_n'$ for the vertices in $B$ and an order $c_1,\ldots, c_n$ for the vertices in $C$ such that $b_i'$ is adjacent to $c_j$ if and only if $i<j$.
\end{definition}

It is important to note that the orders $b_1,\ldots, b_n$ and $b_1',\ldots, b_n'$ for the vertices in $B$ are completely independent from each other.
In particular, they may differ.

\medskip
ABC graphs have been studied in~\cite{ABC}, where they have been shown to be of unbounded clique-width.
Bonnet et al.\ extended this result by showing that the twin-width of ABC graphs is unbounded. However, the proof in~\cite{MR4449818} is non-constructive and relies on a counting argument.
We now improve this result in two different ways.
First, we extend it by showing that the ABC graphs have unbounded symmetric difference, which is a stronger conclusion.
Second, our proof is constructive and provides an explicit family of induced subgraphs of ABC graphs with increasing symmetric difference.

\begin{theorem}\label{ABC-unbounded-sd}
The class of ABC graphs has unbounded symmetric difference.
\end{theorem}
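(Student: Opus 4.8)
The plan is to show that for every positive integer $k$ there is an ABC graph $G$ together with an induced subgraph $H$ in which every pair of distinct vertices $x,y$ satisfies $\vsd_H(x,y)\ge k$; since $\sd(G)\ge \min_{x\ne y}\vsd_H(x,y)$, this yields $\sd(G)\ge k$ and hence unbounded symmetric difference over the class. Throughout I identify $a_i$ with its index $i$ in the $A$–$B$ half graph, $c_j$ with its index $j$ in the $B$–$C$ half graph, and each $B$-vertex with a pair $(t,\sigma(t))$, where $t$ is its index in the first half graph and $\sigma(t)$ its index in the second. By the independence of the two orders on $B$ emphasised after \Cref{ABC-graph}, \emph{any} permutation $\sigma$ of $[n]$ is realised by some ABC graph on $3n$ vertices, so I am free to design $\sigma$.

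First I would record, as a lemma, exact formulas (or clean lower bounds) for $\vsd_H(x,y)$ in the six possible cases, read directly off the adjacencies $a_i\sim b_t\iff i<t$ and $b'_s\sim c_j\iff s<j$. Writing $A',B',C'$ for the traces of $A,B,C$ on $H$, one gets: $\vsd_H(a_i,a_j)$ is the number of $B'$-vertices whose first index lies strictly between $i$ and $j$; $\vsd_H(c_i,c_j)$ is the analogous count for second indices; $\vsd_H(\beta,\beta')$ equals (the number of $A'$-indices lying between the first indices of $\beta,\beta'$) plus (the number of $C'$-indices lying between their second indices); $\vsd_H(a_i,c_j)\ge |A'|+|C'|-2$; and $\vsd_H(a_i,\beta)$, $\vsd_H(c_j,\beta)$ contain, respectively, the summands ``number of $B'$-vertices with first index $\le i$'' and ``number of $B'$-vertices with second index $\ge j$''.

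The construction then reconciles the competing demands visible in these formulas: the $A$–$A$ and $C$–$C$ bounds want $B'$ dense relative to $A',C'$, the cross bounds want $B'$ dense in absolute terms, and the $B$–$B$ bound wants $A',C'$ dense between the $B'$-vertices. I would satisfy all of them at once by keeping \emph{all} of $B$ (so $B'=B$ and $\sigma$ is a full permutation) and taking $A'$ and $C'$ to be the vertices whose index is a multiple of $k$ lying in a central range $[k,n-k]$. Then consecutive members of $A'$ (resp.\ $C'$) have exactly $k$ $B'$-vertices between them, so all $A$–$A$ and $C$–$C$ pairs have $\vsd_H\ge k$; the $A$–$C$ pairs are handled by $|A'|+|C'|-2\gg k$; and because $B'=B$ is dense, the singled-out summands force $\vsd_H(a_i,\beta)\ge i-1\ge k-1$ and $\vsd_H(c_j,\beta)\ge n-j\ge k$ for all cross pairs. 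The only remaining case is $B$–$B$, where $\vsd_H(\beta_t,\beta_{t'})$ is comparable to $\big(|t-t'|+|\sigma(t)-\sigma(t')|\big)/k$, which exceeds $k$ as soon as the permutation matrix points $(t,\sigma(t))$ are pairwise at $L^1$-distance $\Omega(k^2)$.

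To produce such a $\sigma$ I would use the ``transpose'' permutation on an $r\times r$ grid with $r=\sqrt n$: writing $t-1=ur+w$ in base $r$, set $\sigma(t)=wr+u+1$. A short case analysis on $\Delta u,\Delta w$ shows any two distinct points are at $L^1$-distance at least $r/2=\Omega(\sqrt n)$, so taking $n=\Theta(k^4)$ makes the $B$–$B$ bound exceed $k$. The main obstacle, and the step deserving the most care, is precisely this $B$–$B$ verification: $\vsd_H(\beta_t,\beta_{t'})$ counts multiples of $k$ inside two index intervals rather than raw distances, so I must check that whenever $t,t'$ are close (and one interval contains no multiple of $k$) the $L^1$-spread of $\sigma$ forces the \emph{other} interval to be long and to meet the central range $[k,n-k]$ in length $\Omega(\sqrt n)$, still contributing $\ge k$ multiples of $k$. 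Handling these boundary ``slivers'', together with the routine off-by-one adjustments that upgrade the various $\ge k-1$ estimates to genuine $\ge k$ bounds (e.g.\ by starting $A',C'$ just above index $k$), is where the real bookkeeping lies.
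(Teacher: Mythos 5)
Your proposal is correct, and its skeleton is the same as the paper's: exhibit, for each $k$, an explicit induced subgraph of an ABC graph in which every pair of distinct vertices has symmetric difference at least $k$, via a case analysis over the six types of pairs, with the only hard case ($B$--$B$ pairs) reduced to a pairwise $L^1$-spread condition on the point set obtained by plotting each $B$-vertex by its two half-graph indices. Where you differ is in a dual choice of parameters. The paper keeps $A$ and $C$ \emph{full} (all indices $1,\ldots,k^3$) and makes $B$ carry $k$ points in every row and column ($k^2$ translates of $\{(pk-q,qk+p)\mid 1\le p\le k,\ 0\le q\le k-1\}$, a lattice-type set with pairwise $L^1$-distance at least $k$); then the $A$--$A$, $C$--$C$ and mixed cases are immediate, and for $B$--$B$ pairs the numbers of distinguishing $A$- and $C$-vertices are \emph{exactly} the coordinate gaps, so spread $k$ suffices. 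You instead keep $B$ a genuine permutation (the digit-transpose, i.e.\ the lattice spanned by $(r,1)$ and $(1,r)$) and sparsify $A$ and $C$ to every $k$-th index, which raises the needed spread to $\Omega(k^2)$ --- exactly what $r=\Theta(k^2)$, $n=\Theta(k^4)$ provides; both routes end at graphs of size $\Theta(k^4)$, and your lattice and the paper's are essentially the same construction. Your parametrization buys two things: being an induced subgraph of an ABC graph is automatic (you only delete $A$- and $C$-vertices from a true ABC graph), whereas the paper must separately pad its cliques $A$ and $C$ from size $k^3$ up to $k^4$ with nested neighbourhoods to embed its $G_k$ into an ABC graph; and your spread bound has a two-line proof, since $|\Delta u\,r+\Delta w|+|\Delta w\,r+\Delta u|\ge\max\{(r+1)|\Delta u+\Delta w|,\ (r-1)|\Delta u-\Delta w|\}\ge\min\{r+1,\ 2(r-1)\}$ for $(\Delta u,\Delta w)\neq(0,0)$. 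The price is the bookkeeping you flag yourself: in your setting the distinguishing-vertex counts are \emph{counts of multiples of $k$ in intervals}, so intervals near the ends of $[n]$ and several off-by-one adjustments must be handled (none of which arise in the paper, where the counts equal the raw gaps); these remaining steps are routine and your outline of them is accurate.
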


\begin{proof}
We will show that for every integer $k\ge 2$ there exists a graph $G_k$ that is an induced subgraph of an ABC graph and such that the symmetric difference of $G_k$ is at least~$k$.
The construction is as follows.
The graph $G_k$ consists of three disjoint cliques $A$, $B$ and $C$, and some edges between them.
The two cliques $A$ and $C$ are each of size $t$ where $t = k^3$.
The clique $B$ has size $k^4$, its
vertices correspond to pairs of integers, and it is partitioned into $k^2$ smaller sets, each of size $k^2$,
\[B = \bigcup_{1\le i,j\le k}B_{ij}\,,\]
where the sets $B_{ij}$ are defined as follows:
\begin{itemize}
    \item $B_{11} = \{(pk-q,qk+p)\mid 1\le p\le k\,,0\le q\le k-1\}$
    \item for all $(i,j)\in [k]^2\setminus\{(1,1)\}$, the set $B_{ij}$ is a translate of the set $B_{11}$:
    \[B_{ij} = B_{11}+((i-1)k^2,(j-1)k^2) = \{(x+(i-1)k^2,y+(j-1)k^2)\mid (x,y)\in B_{11}\}\,.\]
\end{itemize}
See \Cref{figure:high sym dif graph} for an example of $G_k$ for $k = 4$.
Note that for each vertex $b = (b_x,b_y)$ in $B$, it holds that $1\le b_x\le t$ and $1\le b_y\le t$.
To describe the edges between the cliques  $A$, $B$ and $C$, we fix an ordering of the vertices in $A$ and in $C$ as $A = \{a_1,\ldots, a_t\}$ and $C = \{c_1,\ldots, c_t\}$, respectively.
Then, the edges between the cliques $A$, $B$ and $C$ are as follows.
\begin{itemize}
\item For every vertex $a_i\in A$ and every vertex $b = (b_x,b_y)\in B$, vertices $a_i$ and $b$ are adjacent if and only if $i < b_x$.
\item For every vertex $b = (b_x,b_y)\in B$ and every vertex $c_j\in C$, vertices $b$ and $c_j$ are adjacent if and only if $b_y< j$.
\item There are no edges between $A$ and~$C$.
\end{itemize}
This completes the description of the graph $G_k$.

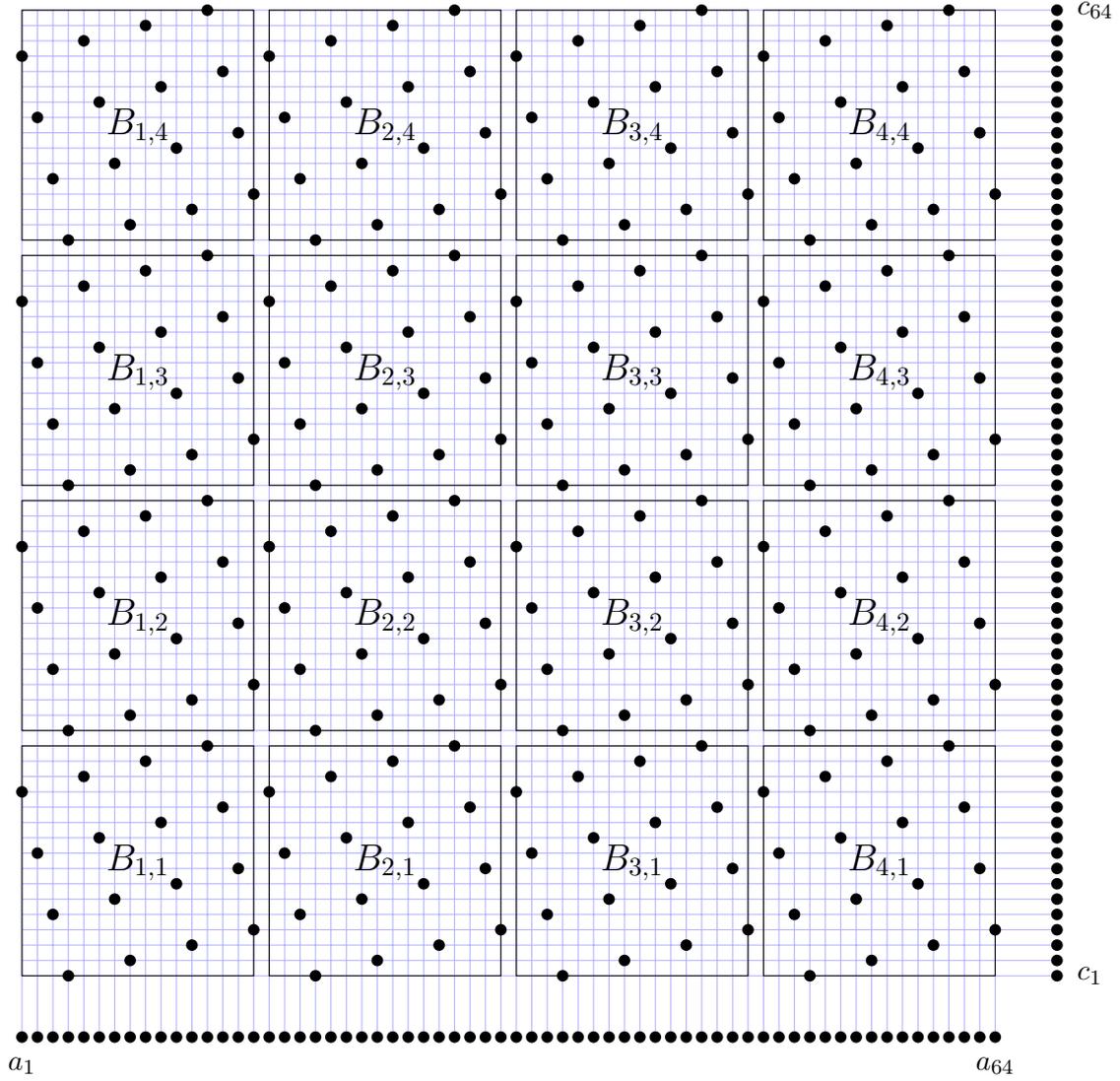
\begin{figure}[h!]
    \centering
    \begin{tikzpicture}[scale=0.825]
    \tikzset{vertex/.style = {draw, circle, scale=0.35, thick,fill}}
    \def\k{4}
    \pgfmathsetmacro\kmo{\k-1}
    \def\insidescale{1/\k}
    \pgfmathsetmacro\origin{-(\k-1)*\insidescale}
    \pgfmathsetmacro\GD{(\k)*(\k)-\insidescale}
    \pgfmathsetmacro\kcb{int(\k*\k*\k)};
    \pgfmathsetmacro\kcbmo{\kcb-1};
    \pgfmathsetmacro\offset{\kmo/\k};
    \pgfmathsetmacro\ksq{int(\k*\k)}
    \pgfmathsetmacro\ksqmo{\ksq-1}
    \pgfmathsetmacro\midx{-\kmo*\insidescale+(\ksqmo*\insidescale)/2}
    \pgfmathsetmacro\midy{(\ksqmo*\insidescale)/2}
    \draw[step=\insidescale,blue!30,thin] (\origin,0) grid (\GD+\origin,\GD);
    \foreach \i in {0,...,\kcbmo}{
        \pgfmathsetmacro\crd{\i/\k};
        \draw[blue!30,thin] (\crd-\offset,-1) to (\crd-\offset,0);
        \node[vertex] at (\crd-\offset,-1) {};
        \draw[blue!30,thin] (\ksq,\crd) to (\ksq-1,\crd);
        \node[vertex] at (\ksq,\crd) {};
    }
    \node[draw=none,label=below:$a_1$] at (-\offset,-1) {};
    \node[draw=none,label=below:$a_{\kcb}$] at (\kcbmo/\k-\offset,-1) {};
    \node[draw=none,label=right:$c_1$] at (\k*\k,0) {};
    \node[draw=none,label=right:$c_{\kcb}$] at (\k*\k,\kcbmo/\k) {};
    \foreach \a in {0,...,\kmo}{
        \foreach \b in {0,...,\kmo}{
            \pgfmathsetmacro\x{\a*(\k)}
            \pgfmathsetmacro\y{\b*(\k)}
            \pgfmathsetmacro\apo{int(\a+1)}
            \pgfmathsetmacro\bpo{int(\b+1)}
        \node[draw=none] at (\x+\midx,\k+\y-\midy-\insidescale) {\Large $\strut B_{\apo,\bpo}$};
            \begin{scope}[xshift=\x cm,yshift=\y cm,scale=\insidescale]
            \draw[step=1.0,black,thin] (0-\kmo,0) rectangle (\k*\kmo,\kmo*\k+\kmo);
            \foreach \i in {0,...,\kmo}{
                \foreach \j in {0,...,\kmo}{
                    \node[vertex] (\k*\i-\j, \k*\j+\i) at (\k*\i-\j, \k*\j+\i) {};
                }}

            \end{scope}
        }
    }
    \end{tikzpicture}
    \caption{A schematic representation of the graph $G_k$, for $k = 4$, as defined in \cref{ABC-unbounded-sd}.
    The bottom-most vertices belong to the clique $A$; the right-most vertices belong to the clique $C$; vertices on the grid belong to the clique~$B$.
    For each $i \in [t]$, vertex $a_i$ (on column~$i$) is adjacent to all the vertices in $B$ on a column $j > i$.
    Similarly, for each $i \in [t]$, vertex $c_i$ (on row $i$) is adjacent to all the vertices in $B$ on a row $j < i$.}
    \label{figure:high sym dif graph}
\end{figure}

Next, we show that the symmetric difference of $G_k$ is at least $k$, that is, that for any two distinct vertices $u,v\in V(G_k)$ we have $\vsd(u,v)\ge k$.
We distinguish several cases depending on which of the cliques $A$, $B$ and $C$ vertices $u$ and $v$ belong to (and taking into account the fact that $\vsd(u,v) = \vsd(v,u)$).

We first observe that for any fixed $i\in [t]$, the number of vertices  $(b_x,b_y)$ in $B$ with $b_x = i$ is~$k$.
Similarly, the number of vertices in $B$ with $b_y = i$ is also~$k$.

\medskip
\noindent{\it Case 1: $u,v\in A$.}
Let $u = a_i$ and $v = a_j$ be any two vertices of $A$ with $1\le i<j\le t$.
By construction we have that $\{(b_x,b_y)\in B\mid b_x = j\}\subseteq N(u)\setminus N(v)$.
Therefore, by the above observation, $\vsd(u,v)\ge k$.

\medskip
\noindent{\it Case 2: $u,v\in C$.}
Let $u = c_i$ and $v = c_j$ be any two vertices of $C$ with $1\le i<j\le t$.
Again, by construction we have that $\{(b_x,b_y)\in B\mid b_y = i\}\subseteq N(v)\setminus N(u)$.
Thus, we again obtain that $\vsd(u,v)\ge k$.

\medskip
\noindent{\it Case 3: $u\in A$ and $v\in C$.}
By construction we have that $A\setminus \{u\} \subseteq N(u)$ and $(A\setminus \{u\}) \cap N(v) = \emptyset$.
Thus, $\vsd(u,v)\ge t -1 = k^3-1\ge k$ (here we use the assumption that $k\ge 2$).

\medskip
\noindent{\it Case 4: $u\in A$ and $v\in B$.}
Let $u = a_i\in A$, $i\in [t]$, and let $v = (b_x,b_y)\in B$.
Note that by definition, no vertex in $A$ is adjacent to any vertex $b\in B$ with the first coordinate equal to~$1$.
As we have already observed, there are exactly $k$ such vertices in~$B$.
Thus, if $b_x\ge 2$, then, since $B$ is a clique, $\vsd(u,v)\ge k$ and we are done.
Therefore, we may assume that $b_x = 1$.
But then $N(v)\cap A = \emptyset$, while $A\setminus\{u\}$ is of size $t-1$ contained in $N(u)$.
Thus, we again have that $\vsd(u,v)\ge t-1\ge k$.

\medskip
\noindent{\it Case 5: $u\in B$ and $v\in C$.}
In this case the arguments are similar to those in Case 4.
Let $u = (b_x,b_y)\in B$ and $v = c_j\in C$, $j\in [t]$.
No vertex in $C$ is adjacent to any vertex $b\in B$ with the second coordinate equal to $t$ and there are exactly $k$ such vertices in~$1$.
Thus, if $b_y\le t-1$, then, since $B$ is a clique, $\vsd(u,v)\ge k$ and we are done.
Therefore, we may assume that $b_y = t$.
But then $N(u)\cap C = \emptyset$, while $C\setminus\{v\}$ is of size $t-1$ contained in $N(v)$, and $\vsd(u,v)\ge k$.

\medskip
\noindent{\it Case 6: $u,v\in B$.}
Let $u = (u_x,u_y)\in B_{ij}$ and $v = (v_x,v_y)\in B_{i'j'}$ be two distinct vertices of~$B$.
By construction of the graph $G_k$, exactly $|u_x-v_x|$ vertices in $A$ are adjacent to precisely one of $u$ and $v$ and, similarly, exactly $|u_y-v_y|$ vertices in $C$ are adjacent to precisely one of $u$ and~$v$.
Therefore, $\vsd(u,v)\ge k$ whenever the Manhattan distance $|u_x-v_x|+|u_y-v_y|$ between $u$ and $v$ is at least~$k$.
If $|i'-i|\ge 2$, then $|u_x-v_x|\ge k^2+1$.
Consequently, $\vsd(u,v)\ge k^2+1\ge k$ whenever $|i'-i|\ge 2$ and a similar conclusion holds if $|j'-j|\ge 2$.

We may therefore assume that $|i' - i| \le 1$ and $|j' - j | \le 1$.
Let us verify that in this case the Manhattan distance between $u$ and $v$ is at least~$k$.
We have $u = (x+(i-1)k^2,y+(j-1)k^2)$ where $x = pk-q$ and $y = qk+p$ for some $1\le p\le k$ and $0\le q\le k-1$ and, similarly,
$v = (x'+(i'-1)k^2,y'+(j'-1)k^2)$ where $x' = p'k-q'$ and $y' = q'k+p'$ for some $1\le p'\le k$ and $0\le q'\le k-1$.
We thus have
\[|u_x-v_x| = |(i-i')k^2+(p-p')k+q'-q|\]
and
\[|u_y-v_y| = |(j-j')k^2+(q-q')k+p-p'|\,.\]

Consider first the case when $i' = i$ and $j' = j$.
In this case, the Manhattan distance between $u$ and $v$ is equal to
$|(p-p')k+q'-q|+|(q-q')k+p-p'|$.
If $p' = p$ then $q'\neq q$ and the expression simplifies to $(k+1)|q-q'|\ge k+1$.
Similarly, if $q' = q$ then $p'\neq p$ and the above expression simplifies to  $(k+1)|p-p'|\ge k+1$.
If $p'\neq p$ and $q'\neq q$, then we may assume without loss of generality that $p>p'$.
Suppose to a contradiction that the Manhattan distance between $u$ and $v$ is less than $k$,
that is,
\[|(p-p')k+q'-q|+|(q-q')k+p-p'| < k\,.\]
Since $p-p'\in [k-1]$ and $q'\neq q$, we obtain from the second term on the left side of the inequality that $q-q' = -1$.
Thus, we have that $q'>q$, and from the first term on the left side of the inequality we obtain that $p-p' = -1$, a contradiction with the assumption that $p>p'$.

Next, consider the case when $i' \neq i$.
We may assume without loss of generality that $i' > i$, and hence $i' = i+1$ (recall that $|i'-i|\le 1$).
The Manhattan distance between $u$ and $v$ is hence equal to
$|k^2 + (p'-p)k + (q-q')| + |(j'-j)k^2 + (q'-q)k + (p'-p)|$.
Suppose for a contradiction that the Manhattan distance between $u$ and $v$ is less than~$k$.
In that case, we must have that $(p'-p) = -(k-1)$ and $(q-q')<0$.
Recall that $|j'-j|\le 1$.
First, if $j'-j = 0$, we must have that $q'-q = 1$, but then the Manhattan distance between $u$ and $v$ is equal to $|k^2-(k-1)k-1|+|k-(k-1)| = k$, a contradiction.
Second, if $j'-j = 1$, then, as $(q-q') < 0$, we have that $|v_y-u_y| \ge k^2$, a contradiction.
Third, if $j'-j = -1$, then we must have that $(q-q') = -(k-1)$.
However, in that case the Manhattan distance between $u$ and $v$ is equal to
$|k^2-(k-1)k-(k-1)|+|-k^2+(k-1)k-(k-1)| = 1+|-2k+1| = 2k$, again a contradiction.

Finally, consider the case when $i' = i$ and $j' \neq j$.
Similarly as in the previous case, we may assume without loss of generality that $j' > j$, and hence $j' = j+1$.
The Manhattan distance between $u$ and $v$ is equal to $|(p'-p)k + (q-q')| + |k^2 + (q'-q)k + (p'-p)|$.
Suppose for a contradiction that the Manhattan distance between $u$ and $v$ is less than~$k$.
Similarly as in the previous case, we must have that $(q'-q) = -(k-1)$ and $(p'-p)<0$.
Furthermore, we must have $p'-p = -1$, since otherwise the first term $|(p'-p)k + (q-q')| = |(p'-p)k +(k-1)|$ would exceed~$k$.
But then the Manhattan distance between $u$ and $v$ is equal to $|-k+(k-1)|+|k^2-(k-1)k-1| = k$, a contradiction.

\medskip
Finally, we show that $G_k$ is an induced subgraph of an ABC graph.
To this end, we show that it is possible to add vertices to the clique $A$ and to the clique $C$ so that each of the resulting cliques $A'$ and $C'$ has cardinality equal to the cardinality of $B$, that is, $k^4$, the edges between $A'$ and $B$ form a half graph, the edges between $B$ and $C'$ form a half graph and there are no edges between $A'$ and $C'$.

By symmetry, it suffices to show that it is possible to add vertices to the clique $C$ to obtain a clique $C'$ with $|C'| = k^4$ so that the edges between $B$ and $C'$ form a half graph.
First, notice that for every vertex $b = (b_x,b_y)$ in $B$, its neighbourhood in $C$ depends only on the value of $b_y$.
Denoting for each $j \in [t]$ by $B_j$ the set of all vertices $(b_x,b_y)$ in $B$ such that $b_y = j$, it holds that $|B_j| = k$.
Fix an arbitrary ordering of the vertices of $B$ as $B = \{b_1,\ldots, b_{k^4}\}$ such that the vertices in the same part $B_j$ appear consecutively and the vertices of $B_j$ appear before the vertices of $B_{j+1}$, for all $j\in [t-1]$.
It follows from the construction that for every $j \in [t]$, the neighbourhood of $c_j$ in $B$ equals to the union $\bigcup_{i<j}B_i = \{b_\ell : \ell \leq k \times (j-1)\}$.
In particular, for any two consecutive vertices in $C$, their neighbourhoods in $B$ are nested and differ in precisely $k$ vertices.
We extend the clique $C$ to a larger clique $C'$ by adding, for each $j \in [t]$, a set of $k-1$ new vertices $c_j^1, \dots, c_j^{k-1}$ such that, writing $c_j^{0} = c_j$, for each $i \in \{0,\ldots, k-1\}$ the neighbourhood of $c_j^i$ in $B$ equals $\{ b_\ell : \ell \leq k \times (j-1) + i\}$.
This implies that the $k^4$ vertices of $C'$ all have distinct but comparable neighbourhoods in $B$, with $N(c_1) \cap B = \emptyset$ and $N(c_t^{k-1}) \cap B = B\setminus \{b_{k^4}\}$.
In particular, the edges between $B$ and $C'$ form a half graph, as desired.
\end{proof}

As observed by Bonnet et al.~\cite{MR4449818}, every ABC graph is an interval graph.
Thus, \Cref{ABC-unbounded-sd} implies the following.

\begin{corollary}\label{cor:sd-R1}
The class of interval graphs has unbounded symmetric difference.
\end{corollary}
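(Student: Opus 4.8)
The plan is to obtain \Cref{cor:sd-R1} as an immediate consequence of \Cref{ABC-unbounded-sd}, once we know that every ABC graph is an interval graph. Concretely, the graphs $G_k$ constructed in the proof of \Cref{ABC-unbounded-sd} are induced subgraphs of ABC graphs and satisfy $\sd(G_k)\ge k$ for every $k\ge 2$. Since interval graphs form a hereditary class, it is enough to verify that every ABC graph is an interval graph: then each $G_k$, being an induced subgraph of an ABC graph, is itself an interval graph, and the inequality $\sd(G_k)\ge k$ exhibits interval graphs of arbitrarily large symmetric difference. Thus the entire content of the corollary reduces to producing an interval representation of an arbitrary ABC graph (the fact observed by Bonnet et al.), which I would reconstruct rather than merely cite.

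To build such a representation, I would start from the ordered description in \Cref{ABC-graph}. Writing $A=\{a_1,\dots,a_n\}$ and $C=\{c_1,\dots,c_n\}$, I associate with each $b\in B$ a pair $(\beta(b),\mu(b))\in[n]^2$, where $\beta(b)$ is the position of $b$ in the order realising the $A$--$B$ half graph and $\mu(b)$ is its position in the (independent) order realising the $B$--$C$ half graph, so that $a_i\sim b\iff i<\beta(b)$ and $b\sim c_j\iff \mu(b)<j$. Fixing a large constant, say $L=3n$, I would then assign the intervals
\[
a_i=[-L,-i],\qquad c_j=[2n-j,\,L],\qquad b=\bigl[-\beta(b)+\tfrac12,\ 2n-\mu(b)-1\bigr].
\]
The design principle is that each $B$-interval straddles the origin, with its left endpoint encoding the $A$-side threshold $\beta(b)$ and its right endpoint encoding the $C$-side threshold $\mu(b)$; the $A$-intervals share the far-left region $[-L,-n]$ and the $C$-intervals share the far-right region $[2n-1,L]$, which will make each of $A$, $B$, $C$ a clique.

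It then remains to carry out the (routine) checks confirming that these intervals realise exactly the adjacencies of the ABC graph: that $A$, $B$, $C$ are cliques (each family has a common sub-interval, and every $B$-interval contains $0$), that there are no $A$--$C$ edges (the right endpoints of $A$-intervals are negative while the left endpoints of $C$-intervals are positive), and that the two half-graph conditions hold, namely $a_i\cap b\ne\emptyset\iff -\beta(b)+\tfrac12\le -i\iff i<\beta(b)$, and symmetrically $b\cap c_j\ne\emptyset\iff 2n-j\le 2n-\mu(b)-1\iff \mu(b)<j$. The main obstacle, and the only genuinely non-trivial point, is precisely this simultaneous encoding: a single interval per $B$-vertex must realise two \emph{independent} half graphs at once, which is what forces the left endpoint to govern the interaction with $A$ and the right endpoint the interaction with $C$, while the long central overlap keeps $B$ a clique and the one-sided reach of $A$- and $C$-intervals prevents any $A$--$C$ edge. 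With this representation in hand, the corollary follows by the heredity argument of the first paragraph.
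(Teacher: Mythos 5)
Your proposal is correct and takes essentially the same route as the paper: \Cref{cor:sd-R1} is deduced from \Cref{ABC-unbounded-sd} together with the fact that every ABC graph is an interval graph (plus heredity of the class). The only difference is that the paper simply cites Bonnet et al.~\cite{MR4449818} for that fact, whereas you reconstruct an explicit interval representation --- and yours checks out: all three classes are cliques (the $B$-intervals share the point $0$), the $A$- and $C$-intervals are separated, and the endpoint computations $-\beta(b)+\tfrac12\le -i\iff i<\beta(b)$ and $2n-j\le 2n-\mu(b)-1\iff \mu(b)<j$ correctly realise the two independent half graphs.
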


\Cref{ABC-unbounded-sd} has another consequence for the class of unit box intersection graphs in~$\mathbb{R}^2$.
	
\begin{proposition}\label{ABC-graph-are-unit-box-intersection-graphs}
Every ABC graph is a unit box intersection graph in $\mathbb{R}^2$.
\end{proposition}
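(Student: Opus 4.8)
The plan is to realise a given ABC graph $G$ by assigning to each vertex a unit square in the plane, which I encode by its centre $(X,Y)$; two such squares intersect precisely when $|X_1-X_2|\le 1$ and $|Y_1-Y_2|\le 1$. The guiding idea is to let the $x$-coordinate carry the half graph between $A$ and $B$ and the $y$-coordinate carry the half graph between $B$ and $C$, while arranging the two coordinates so that the pairs governed by the \emph{other} half graph stay automatically within distance one and the cliques $A$, $B$, $C$ each collapse into a tight cluster.

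Concretely, I would fix a tiny $\epsilon>0$ (say $\epsilon<1/(2n)$) and use the one-dimensional gadget that, placing one side of a half graph near $0$ and the other near $-1$ with a small offset $\epsilon/2$, turns the threshold $|\cdot|\le 1$ into the strict condition $i<j$: with $X(a_i)=i\epsilon$ and, writing $\sigma(b)$ for the rank of $b\in B$ in the $A$--$B$ ordering, $X(b)=-1-\epsilon/2+\sigma(b)\epsilon$, one computes $X(a_i)-X(b)=1+\epsilon/2+(i-\sigma(b))\epsilon$, so that $|X(a_i)-X(b)|\le 1$ iff $i<\sigma(b)$. I mirror this in the $y$-coordinate for the $B$--$C$ half graph: writing $\tau(b)$ for the rank of $b$ in the $B$--$C$ ordering, I set $Y(b)=\tau(b)\epsilon$ and $Y(c_j)=-1-\epsilon/2+j\epsilon$, so that $|Y(b)-Y(c_j)|\le 1$ iff $\tau(b)<j$. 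It remains to fix the coordinates left free: place every vertex of $A$ at $y\approx 0$ and every vertex of $C$ at $x=-1$ (tight clusters). Each $b\in B$ thus receives independent coordinates $X(b),Y(b)$ coming from its two (possibly different) ranks $\sigma(b)$ and $\tau(b)$.

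The verification then splits into six cases according to which cliques the two vertices lie in. The three within-clique cases are immediate: each of $A$, $B$, $C$ occupies a cluster of width $O(n\epsilon)<1$ in both coordinates, so every internal pair intersects. For $A$--$B$ the $y$-coordinates are all $O(n\epsilon)$-close, so adjacency is decided by $x$ and gives exactly the half graph; symmetrically, the $B$--$C$ pairs are always $x$-close (both clusters sit near $-1$) and are decided by $y$. For $A$--$C$ one observes that $A$ sits at $x\ge\epsilon>0$ while $C$ sits at $x=-1$, so the $x$-distance is $1+i\epsilon>1$ and the pair is non-adjacent, as required. Choosing $\epsilon$ small keeps all the always-close distances strictly below $1$ and all the always-far distances strictly above $1$, and the offset $\epsilon/2$ guarantees that no pair lands exactly at distance $1$ (the critical quantity $1+\epsilon/2+(i-j)\epsilon$ equals $1$ only for $i-j=-1/2$, which is impossible for integers), so the representation is unambiguous regardless of whether the boxes are taken open or closed.

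The one genuine obstacle is that $B$ takes part in two half graphs simultaneously, with respect to two orderings $\sigma$ and $\tau$ that \Cref{ABC-graph} allows to be completely unrelated; a single coordinate cannot realise both, which is exactly why two dimensions are needed and why I assign $X(b)$ and $Y(b)$ from the two ranks independently. The accompanying technical point is to place the remaining clusters ($A$ in $y$, $C$ in $x$) so that they neither disturb the two half graphs nor create spurious $A$--$C$ edges; the computation above shows that the natural placement already forces $A$--$C$ non-adjacency through the $x$-coordinate, so no extra separating shift is needed.
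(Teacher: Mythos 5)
Your construction is correct and follows essentially the same approach as the paper's proof: one coordinate of the plane carries the $A$--$B$ half graph, the other carries the $B$--$C$ half graph, each clique is clustered tightly, and $A$ and $C$ are separated along one axis (you have merely swapped the roles of $x$ and $y$ relative to the paper). The only difference is presentational: the paper gives an informal ``proof by picture'' adapted from Bonnet et al., whereas you supply explicit coordinates with the $\epsilon/2$ offset and verify all six cases algebraically, which makes the same argument fully rigorous.
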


\begin{proof}
Bonnet et al.\ showed in~\cite{MR4449818}, with a proof by picture, that every ABC graph is a unit disk graph.
We adapt their approach (and their figure) to show that every ABC graph is a unit box intersection graph in $\mathbb{R}^2$.
Let $G$ be any ABC graph with $3n$ vertices, $n\ge 1$, with cliques $A$, $B$ and $C$, and orderings $a_1,\ldots, a_n$, $b_1,\ldots, b_n$, $b_1',\ldots, b_n'$, and $c_1,\ldots, c_n$ as in the definition of ABC graphs.
We explain how to represent $G$ as a unit box intersection graph in $\mathbb{R}^2$; see the right part of~\Cref{fig:int-unit-squares}.

Place $n$ unit boxes in $\mathbb{R}^2$ representing vertices of $A$ so that their centers are close to each other and decreasing in both coordinates, in order $a_1,\ldots, a_n$.
Then, place $n$ other unit boxes in $\mathbb{R}^2$ representing vertices of $B$ above the $A$-boxes so that their centers are close to each other and the intersections between $A$-boxes and $B$-boxes realise the half graph formed by the edges between $A$ and $B$, taking into account the ordering $b_1,\ldots, b_n$.
Note that this imposes constraints on the vertical positions (that is, values of $y$-coordinates) of the centers of the $B$-boxes, but leaves some freedom about shifting the $B$-boxes horizontally.
Next, place additional $n$ unit boxes in $\mathbb{R}^2$ representing vertices of $C$ to the right of the boxes representing vertices of $B$ so that the centers of the $C$-boxes are close to each other, decrease in both coordinates, in order $c_1,\ldots, c_n$, and no $C$-box intersects any $A$-box.
Finally, if necessary, shift the $B$-boxes horizontally so that the intersections between $B$-boxes and $C$-boxes realise the half graph formed by the edges between $B$ and $C$, taking into account the ordering $b_1',\ldots, b_n'$.
\end{proof}

\Cref{ABC-unbounded-sd} and \Cref{ABC-graph-are-unit-box-intersection-graphs} imply the following.

\begin{corollary}\label{cor:sd-R2}
The class of unit box intersection graphs in $\mathbb{R}^2$ has unbounded symmetric difference.
\end{corollary}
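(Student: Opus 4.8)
The plan is to derive this directly from the two preceding results, with essentially no new work. First I would record the simple observation that having unbounded symmetric difference is a property inherited by superclasses: if $\mathcal{C}_1 \subseteq \mathcal{C}_2$ are graph classes and the symmetric difference is unbounded over $\mathcal{C}_1$, then it is a fortiori unbounded over $\mathcal{C}_2$, because every graph witnessing large $\sd$ in $\mathcal{C}_1$ is also a member of $\mathcal{C}_2$ with exactly the same value of $\sd$ (the quantity $\sd(G)$ depends only on the isomorphism type of $G$, not on the ambient class).

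The key input is \Cref{ABC-graph-are-unit-box-intersection-graphs}, which guarantees that every ABC graph is a unit box intersection graph in $\mathbb{R}^2$; equivalently, the class of ABC graphs is contained in the class of unit box intersection graphs in $\mathbb{R}^2$. Combined with \Cref{ABC-unbounded-sd}, which exhibits an explicit sequence $(G_k)_{k \ge 2}$ of induced subgraphs of ABC graphs with $\sd(G_k) \ge k$, this containment immediately transfers the unboundedness: the same sequence $(G_k)$ sits inside the class of unit box intersection graphs in $\mathbb{R}^2$ and already witnesses arbitrarily large symmetric difference, so I would simply invoke both results and conclude.

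There is no genuine obstacle at this stage, since all the difficulty has already been absorbed into the proofs of \Cref{ABC-unbounded-sd} (the Manhattan-distance lattice construction and the six-case verification that $\sd(G_k) \ge k$) and \Cref{ABC-graph-are-unit-box-intersection-graphs} (the explicit placement of unit boxes realising the two half graphs). The only subtlety worth flagging is that the witnesses $G_k$ from \Cref{ABC-unbounded-sd} are induced subgraphs of ABC graphs rather than ABC graphs themselves; but the class of unit box intersection graphs in $\mathbb{R}^2$ is hereditary, since deleting a vertex merely removes its associated box from a representation, so these induced subgraphs are again unit box intersection graphs in $\mathbb{R}^2$ and the argument goes through unchanged.
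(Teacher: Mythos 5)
Your proposal is correct and matches the paper exactly: the paper derives \Cref{cor:sd-R2} by simply combining \Cref{ABC-unbounded-sd} with \Cref{ABC-graph-are-unit-box-intersection-graphs}, which is precisely your argument. Your extra remark about hereditariness (or equivalently, the fact that $\sd$ of a graph already maximises over its induced subgraphs, so the ABC graphs containing the witnesses $G_k$ themselves have large symmetric difference) correctly disposes of the only subtlety.
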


\begin{figure}[h!]
  \centering
  \begin{tikzpicture}[scale=1.25]
    \def\t{0.2}
    \foreach \b/\e/\j/\c in {1.2/2/1/blue,1.2/2.3/2/blue,1.2/2.6/3/blue,1.2/2.9/4/blue,1.2/3.2/5/blue, 2.1/4.9/1/red,2.4/4/2/red,2.7/5.2/3/red,3/4.6/4/red,3.3/4.3/5/red, 5/6.5/1/black!30!green,4.1/6.5/2/black!30!green,5.3/6.5/3/black!30!green,4.7/6.5/4/black!30!green,4.4/6.5/5/black!30!green}{
      \draw[very thick,color=\c] (\b, \j * \t) -- (\e, \j * \t) ;
    }
    \node at (2,1.25) {$A$} ;
    \node at (3.75,1.25) {$B$} ;
    \node at (5.5,1.25) {$C$} ;

    \begin{scope}[xshift=10cm, yshift=-0.6cm]
      \def\z{0.10}
    \begin{scope}[xshift=-1cm,yshift=-1cm]
      \def\os{0.1}
      \foreach \i in {0,...,4}{
        \draw[fill=blue,fill opacity=0.2] (-\i*\os,-\i * \z) rectangle (0+2-\i*\os,-\i * \z+2) ;
        \node[fill,circle,inner sep=-0.015cm] at (-\i*\os+1,-\i * \z+1) {};
      }
      \foreach \i in {4,...,0}{
        \draw[fill=black!30!green,fill opacity=0.2] (2+\i * \z,2+\i*\os) rectangle (2+\i * \z+2,2+2+\i*\os) ;
        \node[fill,circle,inner sep=-0.015cm] at (2+\i * \z+1,2+\i*\os+1) {};
      }
      \end{scope}
      \def\ep{-0.35}
      \begin{scope}[xshift=-1cm,yshift=-1cm]
      \foreach \j/\i in {0/4, 1/1, 2/5, 3/3, 4/2}{
        \draw[fill=red,fill opacity=0.2] (-0.5+\i * \z-\ep,2+\j * \z+\ep) rectangle (-0.5+\i * \z-\ep+2,2+\j * \z+\ep+2);
        \node[fill,circle,inner sep=-0.015cm] at (.5+\i * \z-\ep,1+2+\j * \z+\ep) {};

      }
      \end{scope}
      \node at (-1.75,0) {$A$} ;
      \node at (-1.6,2.6) {$B$} ;
      \node at (3.75,2.25) {$C$} ;

    \end{scope}
  \end{tikzpicture}
  \caption{On the left, an ABC-graph with 15 vertices represented by intervals.
    On the right, the same graph, this time represented as a unit box intersection graph in $\mathbb{R}^2$.}
  \label{fig:int-unit-squares}
\end{figure}
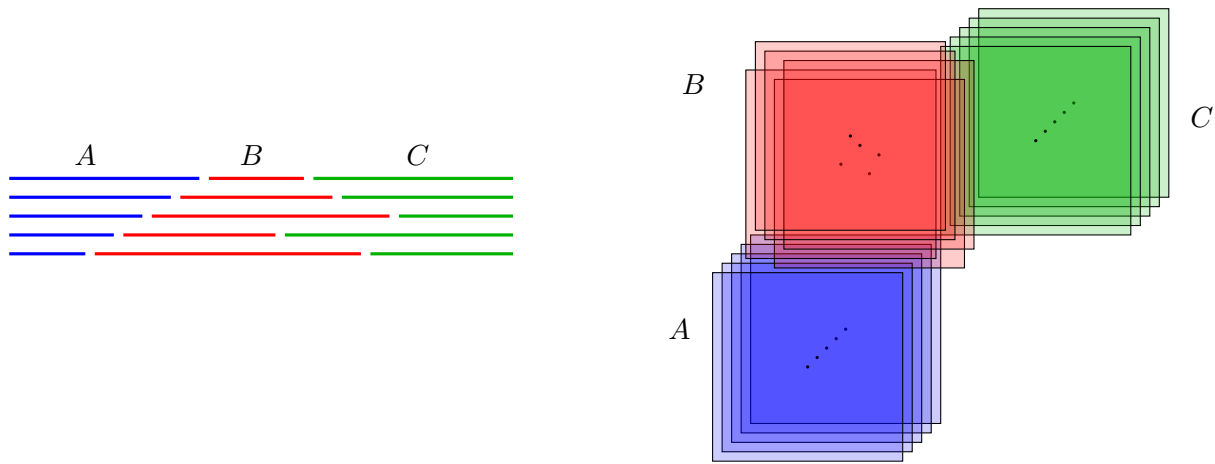

\section{Conclusion}
\label{sec:con}

In this paper we proved a number of results on functionality and symmetric difference of box intersection graphs. A summary of our results is presented in \cref{tab:table1}.

\begin{table}[h!]
  \def\arraystretch{1.3}
    \centering
    \begin{tabularx}{\textwidth}{c|c|Y|Y|Y|}
     \cline{3-5}
     \multicolumn{2}{c|}{} &  $\mathbb{R}^1$ & $\mathbb{R}^2$ & $\mathbb{R}^{\ge 3}$\\
      \hline
     \multicolumn{1}{|c|}{Symmetric}  & unit box & bounded~\cite{AAL21} & \multicolumn{2}{c|}{unbounded (\cref{cor:sd-R2})}\\
    \cline{2-5}
         \multicolumn{1}{|c|}{difference}  & general & \multicolumn{3}{c|}{unbounded (\cref{cor:sd-R1})}\\
    \hline
    \multicolumn{1}{|c|}{\multirow{3}{*}{Functionality}}  & unit box & bounded~\cite{AAL21} & \multicolumn{2}{c|}{open}\\
    \cline{2-5}
    \multicolumn{1}{|c|}{}& \multirow{2}{*}{general} & bounded & \multirow{2}{*}{open} & unbounded\\
    \multicolumn{1}{|c|}{}& & (\cref{thm:fun-R1}) && (\cref{thm:fun-R3})\\
    \hline
    \end{tabularx}
   \caption{Functionality and symmetric difference of box intersection graphs in $\mathbb{R}^d$.}\label{tab:table1}
\end{table}

As indicated in the table, the functionality of box intersection graphs in $\mathbb{R}^2$ remains an open question. It also remains open whether the functionality is bounded for unit box intersection graphs in $\mathbb{R}^{d}$ for any fixed $d\ge 2$.

Among other classes with unknown behaviour of functionality we distinguish the following two important extensions of interval graphs: circular-arc graphs and trapezoid graphs.

Finally, we mention one more open problem related to graph functionality: characterisation and recognition of graphs of small functionality. By definition, vertices of functionality~$0$ are either isolated or dominating,
and hence graphs of functionality $0$ are threshold graphs, i.e.\ graphs every induced subgraph of which contains either an isolated or dominating vertex.
However, the class of graphs of functionality at most $1$ remains a mystery. It contains
\begin{itemize}
\item all forests (as every forest contains a vertex of degree at most $1$) and their complements,
\item all cographs, as every cograph with at least two vertices contains a pair of twins,
\item all distance hereditary graphs, as every graph in this class can be
constructed from a single vertex by successively adding either a pendant vertex or a twin~\cite{dhg}.
\end{itemize}
Note that the class of graphs of functionality at most $1$ is substantially more complex, as it also contains graphs with anti-twins, i.e.\ vertices whose neighbourhoods complement each other.
Similarly to twin vertices, anti-twins are functions of each other. More generally, any two vertices are functions of each other and of the vertices that {\it do not} distinguish them.\footnote{Let $Z$ be the set of vertices not distinguishing $x$ and $y$, i.e.\ $Z = \{z\in V(G)\setminus\{x,y\}\mid z$ is adjacent to either both of $x$ and $y$ or to none of them$\}$.
Then $x$ is a function of $\{y\}\cup Z$, since any vertex $v\in V(G)\setminus (\{y\}\cup Z)$ is adjacent to $x$ if and only if $v$ is not adjacent to $y$ (in particular, the vertices in $Z$ are inessential in this function).
Similarly, the vertex $y$ is a function of the vertices in $\{x\}\cup Z$.}
This observation gives rise to a new notion, analogous to symmetric difference, and to a new line of research related to graph functionality.

\paragraph{Acknowledgements.}
This work is supported in part by the Slovenian Research and Innovation Agency (I0-0035, research programs P1-0285 and P1-0383, research projects J1-3001, J1-3002, J1-3003, J1-4008, J1-4084, N1-0102, and N1-0160 and a Young Researchers Grant)  and by the research program CogniCom (0013103) at the University of Primorska.

\end{document}